\title{A twisted Version of controlled K-Theory}
\author{Elisa Hartmann}
\begin{document}

\maketitle

\begin{abstract}
There are a number of (co-)homology theories on coarse spaces. Controlled operator $K$-theory is by far the most popular one of them. Our approach is geometric. We study when does the Roe-algebra of a space restrict to a subspace. Then we show the Roe-algebra is a cosheaf on the coarse topology. A result is a Mayer-Vietoris exact sequence in the presence of a coarse
cover. We compute examples as an application.

AMS classification: 19K56, 51F99 
\end{abstract}

\tableofcontents

\section{Introduction}

Controlled operator $K-$theory is one of the most popular homological invariants on coarse metric spaces. Meanwhile a new cohomological invariant on coarse spaces recently appeared in \cite{Hartmann2017a} which studies sheaf cohomology on coarse spaces.


In this paper we study the controlled $K$-theory of a proper metric space $X$ which is introduced in~\cite[Chapter~6.3]{Higson2000}. Note that this theory does not appear as a derived functor as far as we know.

In \cite{Higson1993} is studied a coarse excisive property on coarse spaces which we recall now. If $Y\s X$ is a closed subspace then $C^*(Y,X)$ denotes the ideal in $C^*(X)$ which is the norm closure of operators with support near $Y$. Let $A,B\s X$ be two closed subsets of a proper metric space which are $\omega$-excisive. Then 
\[
\xymatrix{
 K_1(C^*(A\cap B,X))\ar[r]
 & K_1(C^*(A,X))\oplus K_1(C^*(B,X))\ar[r]
 & K_1(C^*(X))\ar[d]\\
 K_0(C^*(X))\ar[u]
 & K_0(C^*(A,X))\oplus K_0(C^*(B,X))\ar[l] 
 & K_0(C^*(A\cap B,X))\ar[l]
 }
\]
is a six-term Mayer-Vietoris exact sequence by \cite[Section~5]{Higson1993}.

Note that our approach can be compared with \cite{Roe2013} where it was shown that a quotient $D(X)/C^*(X)$ is a sheaf on the underlying topological space of $X$.


The paper \cite{Hartmann2017a} introduced a Grothendieck topology $X_{ct}$ associated to a coarse space $X$. The underlying category of $X_{ct}$ is the poset of subsets of $X$ and the coverings are finite collections of subsets called \emph{coarse covers}.

The Theorem~\ref{thm:roealgebracosheaf} shows if $X$ is a proper metric space then the association
\[
U\mapsto C^*(\bar U)/\compactops{\hilbert_U}
\]
for every subset $U\s X$ with restriction maps is a cosheaf on $X_{ct}$. 

Note that in a general setting cosheaves with values in the category of abelian groups $\abel$ do not give rise to a derived functor. In~\cite{Curry2014} is explained that the dual version of sheafification, cosheafification, does not work in general. Moreover the category of $C^*$-algebra $\cstar$ is not abelian.

Our result gives rise to new computational tools one of which is a new Mayer-Vietoris six-term exact sequence which is Corollary~\ref{cor:ktheorymv}: If $U_1,U_2\s X$ are subsets of a proper metric space that coarsely cover a subspace $U\s X$ then
\[
\xymatrix{
 K_1(\hat C^*(U_1\cap U_2))\ar[r]
 & K_1(\hat C^*(U_1))\oplus K_1(\hat C^*(U_2))\ar[r]
 & K_1(\hat C^*(U))\ar[d]\\
 K_0(\hat C^*(U))\ar[u]
 & K_0(\hat C^*(U_1))\oplus K_0(\hat C^*(U_2))\ar[l] 
 & K_0(\hat C^*(U_1\cap U_2))\ar[l]
 }
\]
is exact. Here $\hat C^*(Y)=C^*(\bar Y)/\compactops{\hilbert_Y}$ for $Y\s X$ a subset.

The outline of this paper is as follows: The Chapter~\ref{sec:cosheaf} discusses cosheaves on coarse spaces. The main part of the study is in Chapter~\ref{sec:ckt} and Chapter~\ref{sec:exs} computes examples.

\section{Cosheaves}

We recall~\cite[Definition~45]{Hartmann2017a}:
\begin{defn}\name{coarse cover}
 If $X$ is a metric space and $U\s X$ a subset a finite family of subsets $U_1,\ldots,U_n\s U$ is said to 
\emph{coarsely cover} $U$ if for every entourage $E\s X^2$ there is a bounded set $B\s X$ such that
\[
 U^2\cap (\bigcup_i U_i^2)^c\cap E\s B^2.
\]
Coarse covers determine a Grothendieck topology $X_{ct}$ assoociated to a metric space $X$. If $f:X\to Y$ is a coarse map between metric spaces then there is a morphism of Grothendieck topologies $\ii f:Y_{ct}\to X_{ct}$.
\end{defn}

\label{sec:cosheaf}
\begin{defn}\name{precosheaf}
 A \emph{precosheaf on $\coarsetopology X$ with values in a category $C$} is a covariant functor 
$Cat(\coarsetopology X)\to C$.
\end{defn}

\begin{defn}\name{cosheaf}
 Let $C$ be a category with finite limits and colimits. A precosheaf $\sheaff$ on $\coarsetopology X$ with 
values 
in $C$ is 
a \emph{cosheaf on $\coarsetopology X$ with values in $C$} if for every coarse cover $\{U_i\to U\}_i$ there is 
a 
coequalizer diagram:
 \[
  \opij\sheaff U\rightrightarrows \opi \sheaff U \to \sheaff(U)  
 \]
Here the two arrows on the left side relate to the following 2 diagrams:
\[
 \xymatrix{
 \bigoplus_{i,j}\sheaffp {U_i\cap U_j}\ar[r]
 & \bigoplus_i \sheaffp {U_i}\\
  \sheaffp {U_i\cap U_j}\ar[u]\ar[r]
 & \sheaffp {U_i}\ar[u]
 }
\]
and
\[
 \xymatrix{
 \bigoplus_{i,j}\sheaffp {U_i\cap U_j}\ar[r]
 & \bigoplus_i \sheaffp {U_j}\\
  \sheaffp {U_i\cap U_j}\ar[r]\ar[u]
 & \sheaffp {U_j}\ar[u]
 }
\]
where $\bigoplus$ denotes the coproduct over the index set.
\end{defn}

\begin{notat}
If we write
 \begin{itemize}
  \item $\sum_i a_i \in \opi \sheaff U$ then $a_i$ is supposed to be in $\sheaffp{U_i}$
  \item $\sum_{ij} b_{ij}\in\opij \sheaff U$ then $b_{ij}$ is supposed to be in 
$\sheaffp{U_i\cup U_j}$
 \end{itemize}
\end{notat}

\begin{prop}
  If $\sheaff$ is a precosheaf on $\coarsetopology X$ with values in a category $C$ with finite limits and 
colimits and for every 
coarse cover $\{U_i\to U\}_i$
 \begin{enumerate}
  \item  and every $a\in \sheaffp U$ there is some $\sum_i a_i\in\opi 
\sheaff U$ such that $\sum_i a_i|_U=a$
\item and for every $\sum_i a_i\in\opi\sheaff U$  such that $\sum_i a_i|_U=0$ there is some $\sum_{ij} 
b_{ij}\in\opij\sheaff U$ such that $(\sum_j b_{ij}-b_{ji})|_{U_i}=a_i$ for every $i$.
 \end{enumerate}
 then $\sheaff$ is a cosheaf.
\end{prop}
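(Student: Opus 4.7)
The plan is to verify directly that $\sheaff(U)$, together with the map $q\colon \opi\sheaff U\to\sheaff(U)$ assembled from the restrictions $\sheaffp{U_i}\to\sheaff(U)$, is the coequalizer of the two arrows $\alpha,\beta\colon \opij\sheaff U\rightrightarrows \opi\sheaff U$ specified by the two displayed diagrams. The strategy is essentially the first isomorphism theorem: I would compare $\sheaff(U)$ to the abstract cokernel $Q=\operatorname{coker}(\alpha-\beta)$, which exists because $C$ has finite colimits, and show that the induced comparison map is an isomorphism.

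First I would check that $q\alpha=q\beta$ using functoriality alone: for every pair $i,j$, both composites $\sheaffp{U_i\cap U_j}\to\sheaffp{U_i}\to\sheaff(U)$ and $\sheaffp{U_i\cap U_j}\to\sheaffp{U_j}\to\sheaff(U)$ coincide with the direct restriction $\sheaffp{U_i\cap U_j}\to\sheaff(U)$. This yields a canonical factorization $q=\tilde q\circ p$, where $p\colon\opi\sheaff U\to Q$ is the canonical projection to the cokernel, and it then suffices to produce a two-sided inverse for $\tilde q$. Condition (1) says every $a\in\sheaff(U)$ has some lift $\sum_i a_i$ under $q$, so $q$, and hence $\tilde q$, is an epimorphism. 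Unpacking the two diagrams, condition (2) is exactly the assertion that $\ker q\subseteq \operatorname{im}(\alpha-\beta)=\ker p$, since the $i$-component of $(\alpha-\beta)(\sum_{ij}b_{ij})$ is precisely $\sum_j(b_{ij}-b_{ji})|_{U_i}$. Consequently $p$ factors uniquely through $q$, supplying the required inverse.

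The only real point to watch, and essentially the whole obstacle, is that the element-theoretic hypotheses (1) and (2) are doing the work usually done by the universal property in an abelian ambient category; everything goes through cleanly when $C$ is abelian (e.g.\ $\abel$), which is the intended setting. A minor bookkeeping nuisance is the typo in the notation box, where $b_{ij}\in\sheaffp{U_i\cup U_j}$ should read $\sheaffp{U_i\cap U_j}$ to match the sources of $\alpha$ and $\beta$; I would silently correct this in the argument.
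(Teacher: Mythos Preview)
Your argument is correct and supplies exactly the details the paper omits: the paper's own proof consists of the single word ``easy.'' There is therefore nothing substantive to compare; your coequalizer/cokernel verification is the standard route, and your caveats about the abelian (or at least concrete additive) ambient setting and the $U_i\cup U_j$ versus $U_i\cap U_j$ typo are both well taken.
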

\begin{proof}
 easy.
\end{proof}

\begin{rem}
  Denote by $\cstar$ the category of $C^*$-algebras. According to~\cite{Pedersen1999} all finite limits and finite 
colimits exist in $\cstar$.
\end{rem}

\section{Modified Roe-Algebra}
\label{sec:ckt}
\begin{lem}
\label{lem:reps}
 If $X$ is a proper metric space and $Y\s X$ is a closed subspace then 
 \begin{itemize}
 \item the subset $I(Y)=\{f\in C_0(X):f|_Y=0\}$ is an ideal of $C_0(X)$ and we have
 \[
  C_0(Y)=C_0(X)/I(Y)
 \]
 \item we can restrict the non-degenerate representation $\rho_X:C_0(X)\to \boundedops {\hilbert_X}$ to a representation
 \[
  \rho_Y:C_0(Y)\to \boundedops {\hilbert_Y}
 \]
 in a natural way.
 \item the inclusion $i_Y:\hilbert_Y\to \hilbert_X$ covers the inclusion $i:Y\to X$.
 \end{itemize}
\end{lem}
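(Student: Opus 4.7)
The plan is to address the four items in order. The first is a standard Tietze argument. Since $X$ is proper it is locally compact Hausdorff, so every $g\in C_0(Y)$ extends to some $\tilde g\in C_0(X)$. Hence the restriction map $C_0(X)\to C_0(Y)$ is a surjective $*$-homomorphism whose kernel is exactly $I(Y)$, which yields the claimed isomorphism $C_0(X)/I(Y)\cong C_0(Y)$.

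For the second item I would take $\mathcal{H}_Y$ to be the orthogonal complement of $\overline{\rho_X(I(Y))\mathcal{H}_X}$ inside $\mathcal{H}_X$; since $I(Y)$ is self-adjoint this coincides with $\{v\in\mathcal{H}_X:\rho_X(f)v=0\text{ for all }f\in I(Y)\}$. Because $hf\in I(Y)$ whenever $h\in I(Y)$ and $f\in C_0(X)$, the subspace $\mathcal{H}_Y$ is invariant under $\rho_X(C_0(X))$, so the prescription $\rho_Y(g):=\rho_X(\tilde g)|_{\mathcal{H}_Y}$ for a lift $\tilde g$ of $g$ is well defined: two lifts differ by an element of $I(Y)$, which acts as zero on $\mathcal{H}_Y$. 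Non-degeneracy of $\rho_Y$ is inherited from that of $\rho_X$ via approximate units: if $(e_\lambda)$ is an approximate unit of $C_0(X)$ then $(e_\lambda|_Y)$ is one of $C_0(Y)$, and $\rho_Y(e_\lambda|_Y)v=\rho_X(e_\lambda)v\to v$ for every $v\in\mathcal{H}_Y$. The third item then reduces to a support check: if $f\in C_0(X)$ and $g\in C_0(Y)$ have disjoint supports in $X$, then for any extension $\tilde g$ the product $f\tilde g$ vanishes on $Y$, hence lies in $I(Y)$, so $\rho_X(f)\,i_Y\,\rho_Y(g)\,v=\rho_X(f\tilde g)v=0$ for every $v\in\mathcal{H}_Y$.

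The fourth item is the delicate step and the main obstacle. I would argue contrapositively: if $\rho_Y(g)$ were a nonzero compact operator, pick $y_0\in Y$ with $g(y_0)\neq 0$ and a bump $h\in C_0(Y)$ concentrated near $y_0$ with $h(y_0)=1$. Multiplying by $h$ and lifting to $C_0(X)$ one then tries to transport the compactness back to a nonzero compact element of $\rho_X(C_0(X))$, contradicting ampleness of $\rho_X$. The subtlety is that for a generic ample representation $\mathcal{H}_Y$ can degenerate (it would collapse to $0$ if $Y$ were a null set for the underlying measure), so I would either normalise the ample representation at the outset so that a countable dense subset of $X$ meets $Y$ densely, or appeal to uniqueness up to unitary equivalence of ample representations to reduce to that well-behaved case.
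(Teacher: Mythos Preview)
Your treatment of the first three items matches the paper's almost exactly. The paper cites Gelfand duality where you invoke Tietze; it constructs $\mathcal H_Y$ as the orthogonal complement of $\overline{\rho_X(I(Y))\mathcal H_X}$ and defines $\rho_Y$ by restricting $\rho_X$, just as you do; and for the covering property it simply records $\operatorname{supp}(i_Y)=\Delta_Y\subseteq X\times Y$, which is precisely the conclusion of your support computation.

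For the fourth item the paper takes a much shorter route than you: it asserts that if $\rho_Y(f)$ is compact then $i_Y\,\rho_Y(f)\,i_Y^{*}=\rho_X(f)$ is compact, contradicting ampleness of $\rho_X$. Your caution here is warranted. First, that equality is not literally correct: the left-hand side vanishes on $\mathcal H_{I(Y)}$, whereas $\rho_X(\tilde f)$ for a lift $\tilde f\in C_0(X)$ of $f$ generally does not. Second, your worry that $\mathcal H_Y$ can degenerate is a genuine obstruction to the statement as written --- with $X=\mathbb R$, $Y=\{0\}$ and $\rho_X$ the multiplication representation on $L^2(\mathbb R)$ (which is ample) one finds $\mathcal H_Y=0$, so $\rho_Y$ is certainly not ample. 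Your proposed remedies (normalise the ample representation so that the underlying countable dense subset meets $Y$ densely, or use uniqueness of ample representations to pass to such a model) are exactly what is needed to make the claim true; the paper's one-line argument suppresses this issue rather than resolving it.
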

\begin{proof}
 \begin{itemize}
  \item This one follows by Gelfand duality.
  \item We define $\hilbert_{I(Y)}=\overline{\rho_X(I(Y))\hilbert_X}$. Then
  \[
   \hilbert_X=\hilbert_{I(Y)}\oplus \hilbert_{I(Y)}^\perp
  \]
is the direct sum of reducing subspaces for $\rho_X(C_0(X))$. We define
\[
 \hilbert_Y=\hilbert_{I(Y)}^\perp
\]
and a representation of $C_0(Y)$ on $\hilbert_Y$ by
\[
 \rho_Y([a])=\rho_X(a)|_{\hilbert_Y}
\]
for every $[a]\in C_0(Y)$. Note that $\rho_X(\cdot)|_{\hilbert_Y}$ annihilates $I(Y)$ so this is well defined.
\item Note that the support of $i_Y$ is 
\f{
 \supp(i_Y)&=\Delta_Y\\
 &\s X\times Y 
}
\end{itemize}
\end{proof}

\begin{rem}
Note that we can not conclude the following: If the representation $\rho_X:C_0(X)\to \boundedops{\hilbert_X}$ is ample and $Y\s X$ is a closed subspace then the induced representation $\rho_Y:C_0(Y)\to \boundedops{\hilbert_Y}$ is ample. There are counterexamples to this claim.
\end{rem}

\begin{lem}
 If $X$ is a proper metric space,
 \begin{itemize}
 \item $B\s X$ a compact subset and $T\in C^*(X)$ an operator with
 \[
  \supp T\s B^2
 \]
 then $T$ is a compact operator.
 \item The converse does not hold. If $T\in C^*(X)$ is a compact operator then there does not necessarily exist a bounded set $B\s X^2$ such that $\supp T\s B^2$
 \item The $C^*$-algebra of compact operators $\compactops{\hilbert_X}$ is an ideal in $C^*(X)$.
 \end{itemize}
\end{lem}
\begin{proof}
\begin{itemize}
 \item Suppose there is a non-degenerate representation $\rho:C_0(X)\to \boundedops {\hilbert_X}$. For every $f\in C_0(B^c),g\in C_0(X)$ the equations $\rho(f)T\rho(g)=0$ and $\rho(g)T\rho(f)=0$ hold. This implies $T(I(B))=0$ and $\im T\cap I(B)=0$. Thus $T:\hilbert_B\to \hilbert_B$ is the same map. Thus $T\in C^*(B)$ already. Now $T$ is locally compact, $B$ is compact thus $T$ is a compact operator.
 \item Note the set of ghost operator as defined in \cite[Definition~1.2]{Roe2014} contains the compact operators. The space $X$ has property A if and only if every ghost operators is compact by \cite[Theorem~1.3]{Roe2014}. Not all of them have bounded support. The paper \cite{Drutu2019} shows there exists a metric space $\mathcal O$ with non-compact ghost projections in $\boundedops{L_2(\mathcal O)}$. This implies that $\mathcal O$ does not coarsely embed into Hilbert space.
 \item This is already \cite[Lemma~4.12]{Roe1993}. For the convenience of the reader we recall the proof: Let $K\s X$ be a set and $v\in\hilbert_X$ be a vector with $\supp v\s K$. Then for every $f\in I(K)$ we obtain $\rho(f)v=0$. Now a vector in $\hilbert_{I(Y)}$ can be written as $\rho(f)w$ where $f\in I(Y),w\in\hilbert_X$. Then
 \f{
 \langle v,\rho(f)w\rangle
 &=\langle \rho(f^*)v,w\rangle\\
 &=0
 }
 Thus $v\in\hilbert_K$. If on the other hand $v'\in\hilbert_X$ is any vector then
 \f{
 0
 &=\langle \rho(f)w,v'\rangle\\
 &=\langle w,\rho(f^*)v'\rangle
 }
 for every $f\in I(K),w\in \hilbert_X$. This implies $\rho(f^*)v'=0$ for every $f\in I(K)$. Thus $\supp v'\s K$. Since $X$ can be written as a union of bounded sets, $X=\bigcup B_i$ with $B_i$ bounded for every $i$, the vectors with compact support form an orthonormal basis of $\hilbert_X$.
 
A finite rank operator $T$ with respect to this basis belongs to $C^*(X)$: First of all $T$ is locally compact since it is compact. We can write
\[
T:h\mapsto\sum_{i=1}^n\alpha_i\langle h,v_i\rangle u_i
\]
here $\alpha_i\ge 0$ and $v_i,u_i$ are vectors with compact support $\supp v_i\s B_i,\supp u_i\s A_i$ for $1\le i\le n$. Let $v\in\hilbert_X$ be a vector. If $g\in I(B_i)$ and $f\in C_0(X)$ are two functions then
\f{
\alpha_i\langle \rho(g)v,v_i\rangle \rho(f) u_i
&=\alpha_i\langle v,\rho(g^*)v_i\rangle\rho(f)u_i\\
&=0
}
Now let $g\in C_0(X),f\in I(A_i)$ be functions. Then $\alpha_i\langle \rho(g)v,v_i\rangle\rho(f) u_i=0$ since $\supp u_i\s A_i$. Thus $\supp T\s \bigcup_{i=1}^n A_i\times B_i$ is controlled.

Since the finite rank operators are dense in the compact operators $\compactops {\hilbert_X}$ we obtain the inclusion $\compactops{\hilbert_X}\s C^*(X)$. Since the composition with a compact operator yields a compact operator the subset $\compactops{\hilbert_X}$ is an ideal in $C^*(X)$.
 \end{itemize}
\end{proof}

\begin{defn}\name{modified Roe-algebra}
Let $X$ be a proper metric space then
 \[
  \hat C^*(X)=C^*(X)/\compactops {\hilbert_X}
  \]
 where $\compactops {\hilbert_X}$ denotes the compact operators of $\boundedops{\hilbert_X}$ is called the 
\emph{modified Roe-algebra} of $X$. 
\end{defn}

\begin{rem}
 If $U\s X$ is a subset of a proper metric space then $U$ is coarsely dense in $\bar U$. We define
 \[
  \hat C^*(U):=\hat C^*(\bar U)
 \]
 Note that makes sense because if $U_1,U_2$ is a coarse cover of $U$ then $\bar U_1,\bar U_2$ is a coarse cover of 
$\bar U$ also.
 \end{rem}

 \begin{lem}
  If $Y\s X$ is a closed subspace and $i_Y:\hilbert_Y\to \hilbert_X$ the inclusion operator of Lemma~\ref{lem:reps} then
  \begin{itemize}
 \item the operator
\f{
 Ad(i_Y):C^*(Y)&\to C^*(X)\\
 T&\mapsto i_Y T i_Y^*
}
is well defined and maps compact operators to compact operators.
\item Then the induced operator on quotients
\[
 \hat {Ad}(i_Y):\hat C^*(Y)\to \hat C^*(X)
\]
is the dual version of a restriction map.
\end{itemize}
\end{lem}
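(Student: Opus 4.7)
The plan is to verify the two bullets separately. The first bullet is the substantive one: I would check the two defining conditions of the Roe algebra for the conjugate $i_Y T i_Y^*$ when $T\in C^*(Y)$, namely controlled support and local compactness. For the support, Lemma~\ref{lem:reps} gives $\supp(i_Y)=\Delta_Y\subset X\times Y$, so by the usual support calculus $\supp(i_Y T i_Y^*)\subset \Delta_Y\circ\supp(T)\circ\Delta_Y^{-1}$, which is just $\supp(T)$ transported along the inclusion $Y^2\hookrightarrow X^2$. Since $Y\subset X$ is a subspace in the coarse structure, any entourage of $Y$ is an entourage of $X$, so the support remains controlled.

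For local compactness, the key identity is $\rho_X(f)\,i_Y = i_Y\,\rho_Y(f|_Y)$ for every $f\in C_0(X)$, which is exactly the statement that $i_Y$ covers the inclusion $Y\hookrightarrow X$ in the sense of Lemma~\ref{lem:reps} (here $f|_Y$ is the image of $f$ under the quotient $C_0(X)\to C_0(Y)=C_0(X)/I(Y)$). Using this twice gives $\rho_X(f)\,i_Y T i_Y^* = i_Y\,\rho_Y(f|_Y) T\,i_Y^*$, which is compact because $T$ is locally compact on $\mathcal{H}_Y$ and the compact operators form a two-sided ideal in $\boundedops{\hilbert_X}$; the same argument applies on the right. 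The claim that $\mathrm{Ad}(i_Y)$ sends compact operators to compact operators is then immediate from the ideal property, since $i_Y$ and $i_Y^*$ are bounded.

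For the second bullet, the statement is interpretational: cosheaves are the categorical duals of sheaves, so whereas a sheaf assigns to $Y\subset X$ a restriction map $\mathcal{F}(X)\to\mathcal{F}(Y)$, a cosheaf assigns an extension map $\mathcal{F}(Y)\to\mathcal{F}(X)$. The induced map $\hat{Ad}(i_Y)\colon \hat C^*(Y)\to\hat C^*(X)$ goes in the cosheaf direction, and this remark identifies it as the structure map to be used when establishing the cosheaf property in Theorem~\ref{thm:roealgebracosheaf}. To make this rigorous one only needs to note that $\hat{Ad}(i_Y)$ descends to the quotient by $\compactops{\hilbert_X}$ precisely because of the compact-preservation established in the first bullet.

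The main obstacle is verifying the covering identity $\rho_X(f)\,i_Y = i_Y\,\rho_Y(f|_Y)$ carefully from the construction in Lemma~\ref{lem:reps}; this uses that $\mathcal{H}_Y=\mathcal{H}_{I(Y)}^\perp$ is a reducing subspace for $\rho_X(C_0(X))$, so $\rho_X(f)$ commutes with the projection onto $\mathcal{H}_Y$ that underlies $i_Y i_Y^*$. Everything else is a formal manipulation with the support calculus and the ideal property of compact operators.
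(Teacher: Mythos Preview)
Your proposal is correct and is essentially the same approach as the paper's, only far more explicit: the paper's entire proof of the first bullet is the one-line remark that ``$i_Y$ covers the inclusion; the other statement is obvious,'' and for the second bullet simply ``easy.'' What you have written is precisely the unpacking of why an isometry covering a coarse map induces a well-defined $\mathrm{Ad}$ map between Roe algebras (support calculus for the controlled-support condition, the intertwining identity $\rho_X(f)\,i_Y=i_Y\,\rho_Y(f|_Y)$ for local compactness), which is the standard argument the paper is tacitly invoking from \cite[Chapter~6.3]{Higson2000}.
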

\begin{proof}
 \begin{itemize}
  \item $i_Y$ covers the inclusion the other statement is obvious.
  \item easy.
 \end{itemize}
\end{proof}

\begin{thm}
\label{thm:roealgebracosheaf}
 If $X$ is a proper metric space then the assignment
 \[
  U \mapsto \hat C^*(U)
  \]
  for every subspace $U\s X$ is a cosheaf with values in $\cstar$.
\end{thm}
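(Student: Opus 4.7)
The plan is to verify the two conditions of the cosheaf criterion (the proposition after the cosheaf definition) for the precosheaf $U\mapsto\hat C^*(U)$. Fix a finite coarse cover $\{U_i\to U\}_{i=1}^n$; by the remark preceding the theorem it suffices to work with $\{\bar U_i\to\bar U\}$, and all computations take place inside $\boundedops{\hilbert_{\bar U}}$ with each $\hat C^*(\bar U_i)$ related to $P_{\bar U_i}C^*(\bar U)P_{\bar U_i}$ modulo compacts via the pushforward $\hat{Ad}(i_{\bar U_i})$ of the preceding lemma. Extending the ample representation of Lemma~\ref{lem:reps} by Borel functional calculus identifies $P_{\bar U_i}$ with $\rho(1_{\bar U_i})$, so the projections form a commuting family with $P_{\bar U_i}P_{\bar U_j}=P_{\bar U_i\cap\bar U_j}$. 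The central tool is a continuous partition $\{\phi_i\}$ on $\bar U$ with $\supp\phi_i\s Z_i:=\bar U\setminus(E\cup E^{-1})[\bar U_i^c]\s\bar U_i$ and $\sum_i\phi_i\equiv 1$ off the set $B:=\bigcap_i(E\cup E^{-1})[\bar U_i^c]$, which the coarse cover hypothesis forces to be bounded, hence compact since $X$ is proper. The defining property of $Z_i$ ensures $E[Z_i]\cup E^{-1}[Z_i]\s\bar U_i$.

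For condition~(1), reduce to $T\in C^*(\bar U)$ with $\supp T\s E$ and set $T_i:=\rho(\phi_i)T$. The containment $E[\supp\phi_i]\s\bar U_i$ forces $T_i\rho(1_{\bar U_i^c})=0$, hence $T_i=P_{\bar U_i}T_iP_{\bar U_i}$ lies in the image of $Ad(i_{\bar U_i})$ and represents a class in $\hat C^*(\bar U_i)$. The defect $T-\sum_i T_i=T\rho(1-\sum_i\phi_i)$ has support inside the compact subset $E^{-1}[B]\times B$ of $\bar U\times\bar U$, so is a compact operator by the preceding lemma on operators of bounded support.

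For condition~(2), take lifts $A_i\in C^*(\bar U_i)$ of the given $a_i$ sharing a common propagation bound $E$, so that their pushforwards satisfy $\sum_i A_i=K$ compact on $\hilbert_{\bar U}$. The two-set case is instructive: setting $\tilde b:=P_{\bar U_1\cap\bar U_2}A_1P_{\bar U_1\cap\bar U_2}$, the key identity $(I-P_{\bar U_2})A_1=(I-P_{\bar U_2})K$ (obtained from $A_1=K-A_2$ and $P_{\bar U_2}A_2=A_2$), together with its right-multiplied analogue, shows that $A_1-P_{\bar U_2}A_1P_{\bar U_2}$ is compact, and hence so is $A_1-\tilde b=P_{\bar U_1}(A_1-P_{\bar U_2}A_1P_{\bar U_2})P_{\bar U_1}$; this gives $\tilde b\equiv A_1$ in $\hat C^*(\bar U_1)$ and, by the symmetric calculation, $-\tilde b\equiv A_2$ in $\hat C^*(\bar U_2)$. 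For general $n$ I would build $\tilde b_{ij}\in C^*(\bar U_i\cap\bar U_j)$ from the partition of unity, e.g.\ $\tilde b_{ij}:=\rho(\phi_i\phi_j)A_i\rho(\phi_i\phi_j)$, which lives on $\hilbert_{\bar U_i\cap\bar U_j}$ by commutativity of the $\rho(\phi_k)$ with the $P_{\bar U_k}$, and verify $\sum_j(b_{ij}-b_{ji})|_{\bar U_i}\equiv a_i$ modulo compacts by expanding and using $\sum_iA_i=K$. The main obstacle is the off-diagonal cross terms $\rho(\phi_j)A_i\rho(\phi_k)$ with $j\neq k$: their supports lie inside $\bar U_j\cap\bar U_k$ and need not be bounded, so the definition of $b_{ij}$ must be combinatorially tuned—either by an alternating-sign assignment across ordered pairs or by a finer subdivision of the partition of unity—so that these cross-term contributions cancel rather than survive as independent defects, and this is the essential cocycle content of the argument.
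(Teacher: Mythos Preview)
Your verification of condition~(1) proceeds via a continuous partition of unity $\{\phi_i\}$ and the pieces $T_i=\rho(\phi_i)T$; the paper instead takes the bare compressions $T_i=V_i^*TV_i$ (i.e.\ $P_{\bar U_i}TP_{\bar U_i}$) and only treats a two-element cover. Your route is genuinely different and in fact cleaner: the paper's compressions double-count on $(U_1\cap U_2)^2$, so the support of $V_1T_1V_1^*+V_2T_2V_2^*-T$ is not contained in $E\cap(U_1^2\cup U_2^2)^c$ as asserted there, whereas your partition-of-unity defect really does have bounded support. (One small slip: since $T_i=\rho(\phi_i)T$, the defect is $\rho(1-\sum_i\phi_i)\,T$, not $T\,\rho(1-\sum_i\phi_i)$; the conclusion is unaffected.)

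For condition~(2) in the two-set case your compression $\tilde b=P_{\bar U_1\cap\bar U_2}A_1P_{\bar U_1\cap\bar U_2}$ together with the identity $(I-P_{\bar U_2})A_1=(I-P_{\bar U_2})K$ is exactly the paper's argument in different notation: the paper sets $T_{12}=V_{12}^*T_1V_{12}$ and checks $T_{12}|_{U_1}=T_1$ and $T_{12}|_{U_2}=-T_2$ modulo compacts.

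Where you go beyond the paper is in attempting general $n$ for condition~(2), and here you yourself flag a genuine gap: the candidate $\tilde b_{ij}=\rho(\phi_i\phi_j)A_i\rho(\phi_i\phi_j)$ does not give $\sum_j(b_{ij}-b_{ji})|_{\bar U_i}\equiv a_i$ without further control of the cross terms, and the promised ``combinatorial tuning'' is never supplied. The paper sidesteps this entirely by proving only the two-set case (which is all that is used in the Mayer--Vietoris corollary), so your proposal already matches the paper's scope once the unfinished general-$n$ paragraph is dropped.
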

\begin{proof}
Let $U_1,U_2\s U$ be subsets that coarsely cover $U\s X$ and $V_1:\hilbert_{U_1}\to \hilbert_U$ and $V_2:\hilbert_{U_2}\to \hilbert_U$ the corresponding inclusion operators.
 \begin{enumerate}
  \item Let $T\in C^*(U)$ be a locally compact controlled operator. We need to construct $T_1\in C^*(U_1),T_2\in C^*(U_2)$ such that
\[
 V_1T_1V_1^*+V_2T_2V_2^*=T
\]
modulo compacts. Denote by 
\[
E=\supp(T)
\]
the support of $T$ in $U$. Define
\[
 T_1:=V_1^*TV_1
\]
and
\[
 T_2:=V_2^*TV_2
\]
then it is easy to check that $T_1,T_2$ are locally compact and controlled operators, thus elements in $C^*(U_1),C^*(U_2)$. Now $\supp(V_1T_1V_1^*)= U_1^2\cap E$ and $\supp(V_2T_2V_2^*)= U_2^2\cap E$. Thus
\f{
 \supp( V_1T_1V_1^*+V_2T_2V_2^*-T)
 &=E\cap (U_1^2\cup U_2^2)^c\\
 &\s B^2
}
where $B$ is bounded. This implies $T_1|_U+T_2|_U=T$.
 \item Suppose there are $T_1\in C^*(U_1), T_2\in C^*(U_2)$ such that
 \[
  V_1T_1V_1^*+V_2T_2V_2^*=0
 \]
 modulo compacts. That implies that $\supp(V_1T_1V_1^*)\s (U_1\cap U_2)^2$ modulo bounded sets and $\supp(V_2T_2V_2^*)\s(U_1\cap U_2)^2$ modulo bounded sets. Also $V_1T_1V_1^*=-V_2T_2V_2^*$ modulo compacts. Denote by $V^i_{12}:\hilbert_{U_1\cap U_2}\to \hilbert_{U_i}$ the inclusion for $i=1,2$. Define
\[
 T_{12}=V_{12}^*T_1V_{12}
\]
 Then 
\f{
 V_{12}^1T_{12}V_{12}^{1*}
 &=V_{12}^1V_{12}^{1*}T_1V_{12}^1V_{12}^{1*}\\
 &=T_1
}
Then $V_1\circ V^1_{12}=V_2\circ V^2_{12}$ implies
\f{
 V_{12}^2T_{12}V_{12}^{2*}
 &=V_2^*V_2V^2_{12}T_{12}V_{12}^{2*}V_2^*V_2\\
 &=V_2^*V_1V^1_{12}T_{12}V_{12}^{1*}V_1^*V_2\\
 &=V_2^*V_1T_1V_1^*V_2\\
 &=-T_2
}
modulo compacts.
 \end{enumerate}
\end{proof}

\section{Computing Examples}
\label{sec:exs}
\begin{cor}
\label{cor:ktheorymv}
 If $U_1,U_2$ coarsely cover a subset $U$ of a proper metric space $X$ then there is a six-term Mayer-Vietoris exact 
sequence
\[
 \xymatrix{
 K_1(\hat C^*(U_1\cap U_2))\ar[r]
 & K_1(\hat C^*(U_1))\oplus K_1(\hat C^*(U_2))\ar[r]
 & K_1(\hat C^*(U))\ar[d]\\
 K_0(\hat C^*(U))\ar[u]
 & K_0(\hat C^*(U_1))\oplus K_0(\hat C^*(U_2))\ar[l] 
 & K_0(\hat C^*(U_1\cap U_2))\ar[l]
 }
\]
\end{cor}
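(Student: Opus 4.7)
The plan is to extract a Mayer-Vietoris pullback square of $C^*$-algebras from Theorem~\ref{thm:roealgebracosheaf} and then invoke the standard six-term exact sequence in operator $K$-theory.

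First I would set up the natural $*$-homomorphisms. The Hilbert space inclusions $V^i_{12}:\hilbert_{U_1\cap U_2}\to \hilbert_{U_i}$ and $V_i:\hilbert_{U_i}\to \hilbert_U$ yield $*$-homomorphisms $\sigma_i:\hat C^*(U_1\cap U_2)\to \hat C^*(U_i)$ and $\rho_i:\hat C^*(U_i)\to \hat C^*(U)$ by conjugation (these are versions of the $\hat{Ad}$ maps from the lemma just before Theorem~\ref{thm:roealgebracosheaf}). Since $V_1V^1_{12}=V_2V^2_{12}$ both realize the inclusion $\hilbert_{U_1\cap U_2}\hookrightarrow \hilbert_U$, we get a commuting square with $\hat C^*(U_1\cap U_2)$ at the top left, $\hat C^*(U_1)$ and $\hat C^*(U_2)$ on the sides, and $\hat C^*(U)$ at the bottom right. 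The cosheaf property then promotes this square to a pullback of $C^*$-algebras: the ``exactness in the middle'' established in the proof of Theorem~\ref{thm:roealgebracosheaf}---namely that any pair $(T_1,T_2)$ with $\rho_1(T_1)=\rho_2(T_2)$ modulo compacts lifts to a common $T_{12}\in\hat C^*(U_1\cap U_2)$---gives surjectivity of the canonical map $\hat C^*(U_1\cap U_2)\to \hat C^*(U_1)\times_{\hat C^*(U)}\hat C^*(U_2)$, while injectivity is automatic because each $V^i_{12}$ is an isometry admitting the reverse conjugation as a one-sided inverse for $\sigma_i$.

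Then I would invoke the standard six-term Mayer-Vietoris exact sequence in $K$-theory attached to a pullback diagram of $C^*$-algebras; the surjectivity statement in part (1) of Theorem~\ref{thm:roealgebracosheaf}, which decomposes any $T\in\hat C^*(U)$ as $\rho_1(T_1)+\rho_2(T_2)$ modulo compacts, plays the role of the admissibility hypothesis required to run this machinery. The displayed connecting maps and direct sum structure then read off from the standard formulation.

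The main obstacle I anticipate is verifying the admissibility condition for the pullback Mayer-Vietoris in its precise form. Classical statements ask that at least one leg $\rho_i$ individually be a surjective $*$-homomorphism onto $\hat C^*(U)$, which does not obviously hold here---one only has joint surjectivity of $\rho_1+\rho_2$ modulo compacts. Bridging this gap may require either an explicit construction of the boundary map via a mapping cone, or an appeal to a more flexible version of the theorem that operates directly on the cosheaf-style ``additive'' short exact sequence $0\to \hat C^*(U_1\cap U_2)\to \hat C^*(U_1)\oplus \hat C^*(U_2)\to \hat C^*(U)\to 0$ encoded by Theorem~\ref{thm:roealgebracosheaf}.
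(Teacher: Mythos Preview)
Your approach is essentially identical to the paper's: deduce from Theorem~\ref{thm:roealgebracosheaf} that the commuting square with $\hat C^*(U_1\cap U_2)$ in the upper left and $\hat C^*(U)$ in the lower right is a pullback of $C^*$-algebras, and then invoke the standard six-term Mayer-Vietoris for such squares (the paper cites \cite[Exercise~4.10.22]{Higson2000}). The admissibility concern you raise in your last paragraph---that one leg $\rho_i$ be individually surjective rather than merely $\rho_1+\rho_2$ jointly surjective---is not addressed in the paper's proof either; it simply asserts the pullback property and applies the cited exercise.
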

\begin{proof}
 We show there is a pullback diagram of $C^*-$algebras and $*$-homomorphisms
 \[
 \xymatrix{
  \hat C^*(U_1\cap U_2)\ar[d]\ar[r]
  & \hat C^*(U_2)\ar[d]\\
  \hat C^*(U_1)\ar[r]
  &\hat C^*(U)
  }
 \]
The result is then an application of~\cite[Exercise~4.10.22]{Higson2000}.

Let $A$ be a $C^*$-algebra with $*$-homomorphisms $f:A\to \hat C^*(U_1),g:A\to \hat C^*(U_2)$ such that 
\[
i_{U_1}f(a)i_{U_1}^*=i_{U_2}g(a)i_{U_2}^*
\]
modulo compacts for every $a\in A$. By Theorem~\ref{thm:roealgebracosheaf} we can use the second cosheaf-axiom on $f(a)-g(a)\in \hat C^*(U_1)\oplus \hat C^*(U_2)$ which restricts to $0$ on $U$. Thus there is some $\sum_{ij} b_{ij}\in \bigoplus_{ij}\hat C^*(U_i\cap U_j)$ such that $(\sum_j b_{1j}-b_{j1})|_{U_1}=f(a)$ and $(\sum_j b_{2j}-b_{j2})|_{U_2}=-g(a)$. Then define a map $h:A\to \hat C^*(U_1\cap U_2)$ by
\[
h(a)=b_{12}-b_{21}
\]
for every $a\in A$. Since the restriction maps associated to the inclusions $U_1\cap U_2\to U_i$ are injective for $i=1,2$ the mapping $h$ is unique and a $*$-homomorphism.
\end{proof}

\begin{rem}
\label{rem:modktheory}
 Now for every proper metric space there is a short exact sequence
 \[
  0\to\compactops{\hilbert_X}\to C^*(X)\to \hat C^*(X)\to 0
 \]
which induces a 6-term sequence in K-theoy:
\[
 \xymatrix{
 K_0(\compactops{\hilbert_X})\ar[r]
 & K_0(C^*(X))\ar[r]
 & K_0(\hat C^*(X))\ar[d]\\
 K_1(\hat C^*(X))\ar[u]
 & K_1(C^*(X))\ar[l] 
 & K_1(\compactops{\hilbert_X})\ar[l]
 }
\]
If $X$ is flasque then
\[
 K_i(\hat C^*(X))=\begin{cases}
                   0 & i=0\\
                   \Z & i=1
                  \end{cases}
\]
\end{rem}

\begin{rem}
Note that the result of Corollary~\ref{cor:ktheorymv} is applicable when computing controlled $K$-theory if the property ample is preserved by restricting the representation of $U$ to the representations of $U_1,U_2$.

If $X$ is a Riemannian manifold then $L^2(X)$ is a Hilbert space. In Example~\ref{ex:ex1}, Example~\ref{ex:ex2} we will use the canonical representations of type $C_0(X)\to \boundedops{L^2(X)}$ on $\R,\R^2$ and certain subspaces of them without mentioning it. In those cases the property ample is preserved by restricting $\R$ to $\R_+$, the space $\R^2$ to $V_1,V_2$ and $V_1$ to $U_1,U_2$ respectively.
\end{rem}

\begin{ex}\name{$\R$}
\label{ex:ex1}
 Now $\R$ is the coarse disjoint union of two copies of $\R_+$ which is a flasque space. By Corollary 
\ref{cor:ktheorymv} there is an isomorphism
\[
 K_i(\hat C^*(\R))=\begin{cases}
                    0 & i=0\\
                    \Z\oplus \Z & i=1
                   \end{cases}
\]
Then it is a result of Remark~\ref{rem:modktheory} that there is an isomorphism
\[
 K_i(C^*(\R))=\begin{cases}
               0 & i=0\\
               \Z & i=1
              \end{cases}.
\]
This result is not surprising, it matches the computation in \cite[Theorem~6.4.10]{Higson2000}.
\end{ex}

\begin{ex}\name{$\R^2$}
\label{ex:ex2}
We coarsely cover $\R^2$ with 
 \[
  V_1=\R_+\times \R \cup\R\times \R_+
 \]
and
\[
 V_2=\R_-\times \R\cup \R\times \R_-.
\]
then again $V_1$ is coarsely covered by
\[
 U_1=\R_+\times \R
\]
and
\[
 U_2=\R\times \R_+
\]
and $V_2$ is coarsely covered in a similar fashion. We first compute modified controlled K-theory of $V_1$ and then of 
$\R^2$. Note that the inclusion $U_1\cap U_2\to U_1$ is split by
\f{
r:\R_+\times\R &\to \R_+^2\\
(x,y)&\mapsto (x,|y|)
}
Thus using Corollary~\ref{cor:ktheorymv} we conclude that
\[
 K_i(\hat C^*(V_j))=\begin{cases}
           0 & i=0\\
           \R & i=1
          \end{cases}
\]
for $j=1,2$. Then again using Corollary~\ref{cor:ktheorymv} and that the inclusion $\R_+^2\to V_i$ is split we can 
compute
\[
 K_i(\hat C^*(\R^2))=\begin{cases}
                      0 &i=0\\
                      0 &i=1
                     \end{cases}
\]
Translating back we obtain
\[
 K_i(C^*(\R^2))=\begin{cases}
                      \Z &i=0\\
                      0 &i=1
                     \end{cases}
\]
This one also fits the computations in \cite[Theorem~6.4.10]{Higson2000}.
\end{ex}

\bibliographystyle{halpha-abbrv}
\bibliography{mybib}

\address

\end{document}